\documentclass[a4paper,11pt,reqno]{amsart}
\usepackage{amssymb,amsmath,hyperref,mathrsfs,scalerel}
\newtheorem{theorem}{Theorem}[section]
\newtheorem{lemma}[theorem]{Lemma}

\theoremstyle{definition}
\newtheorem{definition}[theorem]{Definition}
\newtheorem{remark}[theorem]{Remark}
\newtheorem{example}[theorem]{Example}
\numberwithin{equation}{section}

\newcommand{\N}{\mathbb{N}}
\newcommand{\Z}{\mathbb{Z}}
\newcommand{\Q}{\mathbb{Q}}
\newcommand{\R}{\mathbb{R}}
\newcommand{\C}{\mathbb{C}}

\newcommand{\Hsp}{\mathbb{H}}

\newcommand{\Ogroup}{\mathrm{O}}
\newcommand\tr{\mathrm{t}}
\newcommand\ssm{\smallsetminus}
\newcommand\smod[1]{\ (\operatorname{mod} #1)}
\renewcommand\theta{\vartheta}
\newcommand\ileg[2]{\bigl(\frac{#1}{#2}\bigr)}
\newcommand\leg[2]{\Bigl(\frac{#1}{#2}\Bigr)}

\title{Geometric Proofs of Riemann-Type Theta Relations}
\author{Sander Zwegers}
\date{\today}
\address{Department of Mathematics and Computer Science, University of Cologne, Weyertal 86--90, 50931 Cologne, Germany}
\email{szwegers@uni-koeln.de}

\begin{document}

\begin{abstract}
We present a geometric method for deriving theta function identities from automorphisms of positive definite quadratic forms.
Given an automorphism of a quadratic form with matrix $A$, we construct a rational isometry of $I_n\otimes A$ that fixes the diagonal subspace pointwise.
Elementary lattice decompositions and finite character sums then yield explicit transformation formulas for the associated theta functions.
As applications, we give a geometric proof of Schultz's generalization of the Borweins' cubic theta identity and recover two related quadratic theta identities.
\end{abstract}

\maketitle

\section{Introduction}

Cubic theta functions, introduced by J.M. Borwein and P.B. Borwein, play a central role in Ramanujan's theory of elliptic functions to alternative bases.
Recently, Chan, Chan, Liu, and Zudilin \cite{chan2025} revisited a multivariable generalization due to Schultz \cite{schultz2013}, establishing the cubic identity
\begin{equation} \label{eq:schultz}
\begin{split}
&\biggl(
\sum_{m,n\in\Z} \omega^{m-n} q^{(m+1/3)^2+(m+1/3)(n+1/3)+(n+1/3)^2} x^{m+1/3} y^{n+1/3} \biggr)^3\\
&\quad+\biggl(\sum_{m,n\in\Z} q^{m^2+mn+n^2}x^m y^n \biggr)^3 = 
\biggl(\sum_{m,n\in\Z} \omega^{m-n} q^{m^2+mn+n^2}x^my^n \biggr)^3\\
&\qquad\qquad+\biggl( \sum_{m,n\in\Z} q^{(m+1/3)^2+(m+1/3)(n+1/3)+(n+1/3)^2} x^{m+1/3} y^{n+1/3} \biggr)^3,
\end{split}
\end{equation}
where $q=e^{2\pi i\tau}$, $\tau\in\Hsp$, $x,y\in\C\ssm\{0\}$ and $\omega:=e^{2\pi i/3}$, via analytic methods involving elliptic functions and $q$-series expansions.
Here we provide a purely geometric proof of \eqref{eq:schultz}.
Our method leverages a general structural property of quadratic forms under tensor products:
given an automorphism $\gamma$ of a positive definite quadratic form $A$, we use complementary orthogonal projections to construct a rational automorphism of $I_n\otimes A$ that fixes the diagonal subspace pointwise.
Using elementary lattice decompositions and finite character sums, we then obtain transformation formulas for the associated theta functions.
Our main result is given in Section \ref{sec:5}.
It provides a unified geometric approach to Schultz's identity and the related quadratic identities considered in \cite{chan2025}.

Our work was developed independently of the recent preprint \cite{chen2026}, which establishes a general theta-product formula for rationally isometric quadratic spaces and derives, among other applications, a new proof of Schultz's identity.
In their proof, the authors use a particular rational isometry of a six-dimensional quadratic space.
Our approach is different in that we provide a simple, systematic construction of such isometries from automorphisms of the underlying quadratic form.

\section{Space-Splitting Automorphisms}

Throughout this note let $A\in M_r(\Z)$ be a symmetric positive definite matrix, representing the quadratic form $x\mapsto \frac12x^\tr A x$.
We consider
\[\begin{split}
\Ogroup_A(\Q)&:= \{\gamma \in \operatorname{GL}_r(\Q) \mid \gamma^\tr A \gamma =A\},\\
\Ogroup_A(\Z)&:= \{\gamma \in \operatorname{GL}_r(\Z) \mid \gamma^\tr A \gamma =A\}= \Ogroup_A(\Q)\cap \operatorname{GL}_r(\Z).
\end{split}\]
The elements from these groups preserve the corresponding quadratic form.
Further, the elements from $\Ogroup_A(\Z)$ also preserve the lattice $\Z^r$.

\begin{lemma} \label{lem:projection_props}
Let $n\in\N$, $e:=(1,\ldots,1)\in \Z^n$, $E_n:= \frac1n ee^\tr \in M_n(\Q)$ and $R_n:=I_n-E_n\in M_n(\Q)$.
Then $E_n^2 = E_n = E_n^\tr$, $E_nR_ n=R_nE_n=0$, $R_n^2 = R_n= R_n^\tr$ and $E_n+R_n=I_n$.
Further, $E_ne=e$ and $R_ne=0$.
\end{lemma}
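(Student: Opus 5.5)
The whole argument rests on the single scalar identity $e^\tr e = n$, so I will establish it first and then derive each claim of Lemma~\ref{lem:projection_props} by direct matrix computation.

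\emph{Claims about $E_n$.} Since $(ee^\tr)^\tr = ee^\tr$, we get $E_n^\tr = E_n$. Using associativity,
\[
E_n^2 = \tfrac{1}{n^2}\, e(e^\tr e)e^\tr = \tfrac{n}{n^2}\, ee^\tr = E_n .
\]
In the same way, $E_n e = \frac1n e(e^\tr e) = e$.

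\emph{Claims about $R_n$.} The identity $E_n + R_n = I_n$ is just the definition of $R_n$. Transposing, $R_n^\tr = I_n - E_n^\tr = I_n - E_n = R_n$. For idempotency,
\[
R_n^2 = I_n - 2E_n + E_n^2 = I_n - E_n = R_n .
\]

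\emph{Products and the action on $e$.} We compute $E_nR_n = E_n - E_n^2 = 0$ and $R_nE_n = E_n - E_n^2 = 0$. Finally, $R_n e = e - E_n e = e - e = 0$.

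No step poses a real obstacle. The only point needing care is to use $E_n^2 = E_n$, established first, in the computations for $R_n^2$ and the products $E_nR_n$, $R_nE_n$.
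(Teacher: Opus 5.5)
Your proof is correct and follows the same route as the paper: the paper dismisses the lemma as trivial, and your direct matrix computation from $e^\tr e = n$ (first for $E_n$, then $R_n = I_n - E_n$) is exactly that routine verification, written out.
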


\begin{proof}
Trivial.
\end{proof}

\begin{lemma}\label{lem:construction}
For $n\geq 2$ and $\gamma\in \Ogroup_A(\Q)$ let
\[g := E_n\otimes I_r + R_n \otimes \gamma= I_n\otimes \gamma + E_n\otimes (I_r-\gamma)\in M_{nr}(\Q).\]
Then
\begin{enumerate}
\item $g\in \Ogroup_{I_n\otimes A}(\Q)$,
\item the map $\gamma \mapsto E_n\otimes I_r + R_n \otimes \gamma$ is an injective homomorphism,
\item $g (e \otimes x)=e\otimes x=g^\tr (e\otimes x)$ for all $x\in\C^r$.
\end{enumerate}
\end{lemma}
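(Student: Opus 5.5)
Everything reduces to the identities of Lemma~\ref{lem:projection_props} together with the mixed-product rule $(P\otimes Q)(P'\otimes Q')=PP'\otimes QQ'$ and $(P\otimes Q)^\tr=P^\tr\otimes Q^\tr$. First I check that the two expressions for $g$ agree. Substituting $R_n=I_n-E_n$ gives
\[
E_n\otimes I_r+(I_n-E_n)\otimes\gamma=I_n\otimes\gamma+E_n\otimes(I_r-\gamma).
\]

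For (1), I compute $g^\tr(I_n\otimes A)g$ using $E_n^\tr=E_n$ and $R_n^\tr=R_n$. Expanding, there are four terms:
\[
E_n^2\otimes A,\qquad E_nR_n\otimes A\gamma,\qquad R_nE_n\otimes\gamma^\tr A,\qquad R_n^2\otimes\gamma^\tr A\gamma.
\]
The cross terms vanish because $E_nR_n=R_nE_n=0$. The remaining two are $E_n\otimes A+R_n\otimes A=I_n\otimes A$, using $E_n^2=E_n$, $R_n^2=R_n$, $\gamma^\tr A\gamma=A$ and $E_n+R_n=I_n$. This gives $g^\tr(I_n\otimes A)g=I_n\otimes A$, and in particular $g$ is invertible because $I_n\otimes A$ is. So $g\in\Ogroup_{I_n\otimes A}(\Q)$, rationality being clear since $E_n$, $R_n$ and $\gamma$ are rational.

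For (2), let $\phi(\gamma)=E_n\otimes I_r+R_n\otimes\gamma$. Then
\[
\phi(\gamma)\phi(\delta)=E_n\otimes I_r+R_n\otimes\gamma\delta=\phi(\gamma\delta),
\]
again because the cross terms vanish and $E_n$, $R_n$ are idempotent. Also $\phi(I_r)=I_{nr}$, so $\phi$ is a group homomorphism into $\Ogroup_{I_n\otimes A}(\Q)$ by (1).

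Injectivity is the only place where $n\ge2$ enters, and I expect it to be the one point needing care. Suppose $\phi(\gamma)=\phi(\delta)$; then $R_n\otimes(\gamma-\delta)=0$. Since $n\ge2$, $R_n\neq0$: for example its $(1,1)$ entry is $1-\frac1n\neq0$. A tensor product $P\otimes Q$ with $P\neq0$ vanishes only if $Q=0$, because its blocks are $P_{ij}Q$. Hence $\gamma=\delta$.

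For (3), I use $(P\otimes Q)(e\otimes x)=Pe\otimes Qx$ together with $E_ne=e$ and $R_ne=0$. This gives
\[
g(e\otimes x)=e\otimes x+0=e\otimes x.
\]
The same argument applies to $g^\tr=E_n\otimes I_r+R_n\otimes\gamma^\tr$, since $E_n$ and $R_n$ are symmetric, so $g^\tr(e\otimes x)=e\otimes x$.
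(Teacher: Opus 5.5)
Your proposal is correct and follows essentially the same route as the paper. You expand $g^\tr(I_n\otimes A)g$ via the mixed-product rule and Lemma~\ref{lem:projection_props}, multiply two images to get the homomorphism property, deduce injectivity from $R_n\otimes(\gamma-\delta)=0$, and use $E_ne=e$, $R_ne=0$ for (3). Your extra remarks — that $g$ is invertible, that $\phi(I_r)=I_{nr}$, and that $R_n\neq0$ precisely because $n\ge2$ — just make explicit details the paper leaves tacit.
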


\begin{proof}
Using the algebraic rules of Kronecker products, Lemma~\ref{lem:projection_props} and $\gamma^\tr A\gamma=A$ these are easy calculations:
we have
\[\begin{split}
g^\tr (I_n \otimes A) g &= (E_n^\tr \otimes I_r + R_n^\tr \otimes \gamma^\tr ) (I_n \otimes A) ( E_n \otimes I_r+ R_n \otimes \gamma) \\
&= E_n^\tr E_n \otimes A + E_n^\tr R_n \otimes A\gamma +  R_n^\tr E_n \otimes \gamma^\tr A + R_n^\tr R_n \otimes \gamma^\tr A \gamma \\
&= E_n \otimes A + R_n \otimes A = I_n \otimes A,
\end{split}\]
so $g\in \Ogroup_{I_n\otimes A}(\Q)$.
Similarly
\[\begin{split}
(E_n \otimes I_r + R_n \otimes \gamma_1) (E_n \otimes I_r+ R_n\otimes \gamma_2)= E_n\otimes I_r+ R_n\otimes \gamma_1\gamma_2.
\end{split}\]
From $E_n \otimes I_r + R_n \otimes \gamma_1=E_n \otimes I_r + R_n \otimes \gamma_2$ we get $R_n\otimes(\gamma_1-\gamma_2)=0$ and $\gamma_1=\gamma_2$.
Finally
\[ \begin{split}
g(e\otimes x)&= (E_n\otimes I_r+R_n\otimes \gamma)(e\otimes x) = E_ne\otimes x + R_ne\otimes \gamma x = e\otimes x,\\
g^\tr(e\otimes x)&= (E_n^\tr \otimes I_r+R_n^\tr \otimes \gamma^\tr)(e\otimes x) =E_n^\tr e\otimes x + R_n^\tr e\otimes \gamma^\tr x= e\otimes x.\qedhere
\end{split}\]
\end{proof}

\section{Shifted Lattices}

From now on let $\gamma\in \Ogroup_A(\Z)$.
Then $\gamma$ preserves the lattice $\Z^r$, but the corresponding matrix $g$ does not necessarily preserve $\Z^{nr}$.

\begin{definition}
(a) We set $G:= \tfrac1n(I_r-\gamma)\in M_r(\Q)$.\\
(b) We view an element $m$ of $\Z^{nr}=(\Z^r)^n$ as a vector of $n$ vectors $m_1,\ldots,m_n\in\Z^r$ and set
\[\langle e,m \rangle :=m_1+\ldots+m_n \in \Z^r.\]
\end{definition}

\begin{lemma}\label{lem:angle}
We have $\langle e,gx\rangle=\langle e,x\rangle$ for all $x\in(\R^r)^n$.
\end{lemma}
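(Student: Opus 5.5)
The plan is to rewrite the map $x\mapsto\langle e,x\rangle$ as a Kronecker product and then use Lemma~\ref{lem:construction}(3) in its transposed form. For $x=(x_1,\ldots,x_n)\in(\R^r)^n$ we have
\[\langle e,x\rangle = x_1+\ldots+x_n = (e^\tr\otimes I_r)\,x,\]
since $e^\tr\otimes I_r$ is the $r\times nr$ block row $(I_r,\ldots,I_r)$. The claim is therefore equivalent to the matrix identity $(e^\tr\otimes I_r)\,g = e^\tr\otimes I_r$.

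To prove this identity, insert $g=E_n\otimes I_r+R_n\otimes\gamma$ and apply the mixed product rule:
\[(e^\tr\otimes I_r)g = e^\tr E_n\otimes I_r + e^\tr R_n\otimes \gamma .\]
By Lemma~\ref{lem:projection_props}, $E_n$ and $R_n$ are symmetric with $E_ne=e$ and $R_ne=0$. Transposing these gives $e^\tr E_n=e^\tr$ and $e^\tr R_n=0$. Hence the right-hand side collapses to $e^\tr\otimes I_r$, which is what we need.

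An equivalent route goes through Lemma~\ref{lem:construction}(3) directly. For $y\in\R^r$, the standard pairing gives $y^\tr\langle e,gx\rangle=(e\otimes y)^\tr gx=(g^\tr(e\otimes y))^\tr x=(e\otimes y)^\tr x=y^\tr\langle e,x\rangle$. Since this holds for all $y$, the two vectors agree.

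The argument is entirely routine. The only point that needs care is the bookkeeping that identifies $\langle e,\cdot\rangle$ with $e^\tr\otimes I_r$, which depends on the block ordering $(\Z^r)^n$ being compatible with the Kronecker convention $I_n\otimes A$. I do not expect any real obstacle.
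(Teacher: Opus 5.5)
Your proof is correct, but it uses a different decomposition of $g$ from the paper's. The paper writes $g=I_n\otimes\gamma+ee^\tr\otimes G$ with $G=\frac1n(I_r-\gamma)$. This gives the explicit formula $gx=(I_n\otimes\gamma)x+e\otimes G\langle e,x\rangle$, and summing the blocks yields $\langle e,gx\rangle=(\gamma+nG)\langle e,x\rangle=\langle e,x\rangle$.

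You instead use the projection form $E_n\otimes I_r+R_n\otimes\gamma$ with $e^\tr E_n=e^\tr$ and $e^\tr R_n=0$. In your second route you simply dualize Lemma~\ref{lem:construction}(3). Your version shows the lemma is not really a new fact: the block row $e^\tr\otimes I_r$ is $g$-invariant exactly because its rows $e\otimes y$ are fixed by $g^\tr$, so no further computation is needed.

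The paper's version costs about the same, but its intermediate formula for $gx$ is reused in the proof of Lemma~\ref{lem:inv}. There one needs that $I_n\otimes\gamma$ preserves $(\Z^r)^n$ and that the correction term $e\otimes G\langle e,m\rangle$ lies in $e\otimes d+(\Z^r)^n$ for $m\in W_{a,d}$; this is what tracks how $g$ moves the shifted lattices. Your route does not produce that formula, though it is one line away.

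Your identification of $\langle e,\cdot\rangle$ with $e^\tr\otimes I_r$ matches the paper's block convention (e.g.\ $\langle e,e\otimes a\rangle=na$), so the bookkeeping concern you raise does not arise.
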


\begin{proof}
We write $g$ as $g= I_n\otimes \gamma +ee^\tr \otimes G$, so
\[gx=(I_n\otimes \gamma)x + (ee^\tr \otimes G)x= (I_n\otimes \gamma)x+ e\otimes G\langle e,x\rangle,\]
from which we find
\[ \langle e,gx\rangle=\langle e,(I_n\otimes \gamma)x\rangle +\langle e,e\otimes G\langle e,x\rangle\rangle=(\gamma+n G)\langle e,x\rangle=\langle e,x\rangle.\qedhere\]
\end{proof} 

Now consider the lattice $L := G\Z^r+\Z^r$.
It contains $\Z^r$ as a sublattice and has $\frac1n\Z^r$ as an overlattice, so the index of $\Z^r$ in $L$ divides $n^r$.
Let $C\subset G\Z^r$ be a complete set of coset representatives of $\Z^r$ in $L$. 
For $a,d\in G\Z^r$ consider the shifted lattice
\[ W_{a,d} := \{ m\in e\otimes a + (\Z^r)^n \mid G\langle e,m \rangle \in d + \Z^r\}.
\]
The set $W_{a,d}$ depends only on the cosets of $a$ and $d$ modulo $\Z^r$.

\begin{lemma}\label{lem:cosets}
For all $a\in G\Z^r$ we have
\[ e\otimes a+(\Z^r)^n=\bigsqcup_{d\in C} W_{a,d}.\]
\end{lemma}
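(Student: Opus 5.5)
For $m\in e\otimes a+(\Z^r)^n$, the plan is to show that $G\langle e,m\rangle$ lies in $L$ and lands in exactly one coset $d+\Z^r$ with $d\in C$. Since $C$ is a complete set of coset representatives of $\Z^r$ in $L$, this gives both the covering and the disjointness. The inclusion $\bigsqcup_{d\in C}W_{a,d}\subseteq e\otimes a+(\Z^r)^n$ holds by the definition of $W_{a,d}$, so only the reverse inclusion and disjointness need checking.

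First compute $\langle e,m\rangle$. Writing $m=e\otimes a+k$ with $k\in(\Z^r)^n$, we get $\langle e,m\rangle=na+\langle e,k\rangle$, where $\langle e,k\rangle\in\Z^r$. Hence
\[G\langle e,m\rangle = nGa + G\langle e,k\rangle.\]
The second term lies in $G\Z^r\subseteq L$. For the first term, write $a=Gb$ with $b\in\Z^r$. Then
\[nGa = G(nG)b = G(I_r-\gamma)b.\]
Since $\gamma\in\Ogroup_A(\Z)$, the vector $(I_r-\gamma)b$ is integral, so $nGa\in G\Z^r\subseteq L$. Alternatively, $nGa=(I_r-\gamma)a$, and because $\gamma$ commutes with $G$ this equals $G(I_r-\gamma)b$. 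Either way, $G\langle e,m\rangle\in L$.

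Since $C$ is a complete set of coset representatives of $\Z^r$ in $L$, there is a unique $d\in C$ with $G\langle e,m\rangle\in d+\Z^r$. Existence of $d$ gives $m\in W_{a,d}$, which is the covering. Uniqueness of $d$ modulo $\Z^r$, together with the fact that distinct elements of $C$ represent distinct cosets, gives that the $W_{a,d}$ are pairwise disjoint.

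The one point needing care is showing that the term $nGa$ lies in $L$, because $a$ itself is only in $G\Z^r$. This is exactly where $a\in G\Z^r$ and the integrality of $\gamma$ are used.
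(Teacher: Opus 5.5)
Your proof is correct and follows essentially the same route as the paper. The paper notes directly that $na=nGu=(I_r-\gamma)u\in\Z^r$, so $\langle e,m\rangle\in\Z^r$ and hence $G\langle e,m\rangle\in G\Z^r\subset L$; your computation $nGa=G(I_r-\gamma)b$ encodes exactly this fact. The paper then also uses the uniqueness of the representative $d\in C$ to obtain both the covering and the disjointness.
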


\begin{proof}
Let $m\in e\otimes a +(\Z^r)^n$.
We have $\langle e, e\otimes a\rangle=n a\in \Z^r$, so we get $\langle e,m\rangle\in \Z^r$ and $G\langle e,m\rangle \in L$.
Hence for every $m\in e\otimes a +(\Z^r)^n$ there is a unique $d\in C$ with $G\langle e,m\rangle\in d+\Z^r$.
This shows $e\otimes a+(\Z^r)^n\subset\bigsqcup_{d\in C} W_{a,d}$.
The inclusion in the other direction is trivial.
\end{proof}

\begin{lemma}\label{lem:inv}
For all $a,d\in G\Z^r$ we have $g W_{a,d} = W_{\gamma a+d,d}$.
\end{lemma}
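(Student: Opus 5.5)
We prove the two inclusions $gW_{a,d}\subseteq W_{\gamma a+d,d}$ and $g^{-1}W_{\gamma a+d,d}\subseteq W_{a,d}$. The main tool is the decomposition used in the proof of Lemma~\ref{lem:angle}:
\[
gx=(I_n\otimes\gamma)x+e\otimes G\langle e,x\rangle .
\]

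\emph{Forward inclusion.} Let $m\in W_{a,d}$ and write $m=e\otimes a+k$ with $k\in(\Z^r)^n$. Then
\[
gm=e\otimes\gamma a+(I_n\otimes\gamma)k+e\otimes G\langle e,m\rangle .
\]
Since $\gamma\in\Ogroup_A(\Z)$, the term $(I_n\otimes\gamma)k$ lies in $(\Z^r)^n$. By the definition of $W_{a,d}$ we have $G\langle e,m\rangle=d+z$ with $z\in\Z^r$. Hence
\[
gm\in e\otimes(\gamma a+d)+(\Z^r)^n .
\]
Note that $\gamma a+d\in G\Z^r$, because $\gamma$ commutes with $G=\frac1n(I_r-\gamma)$, so $\gamma G\Z^r=G\gamma\Z^r=G\Z^r$. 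This means $W_{\gamma a+d,d}$ is defined. Lemma~\ref{lem:angle} gives $\langle e,gm\rangle=\langle e,m\rangle$, so $G\langle e,gm\rangle\in d+\Z^r$. Therefore $gm\in W_{\gamma a+d,d}$.

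\emph{Reverse inclusion.} By Lemma~\ref{lem:construction}(2), $g^{-1}=E_n\otimes I_r+R_n\otimes\gamma^{-1}$, and $\gamma^{-1}\in\Ogroup_A(\Z)$. The same decomposition holds with $\gamma^{-1}$ in place of $\gamma$ and $G'=\frac1n(I_r-\gamma^{-1})=-\gamma^{-1}G$ in place of $G$. Take $m'\in W_{\gamma a+d,d}$. Then
\[
g^{-1}m'\in e\otimes\bigl(\gamma^{-1}(\gamma a+d)+G'\langle e,m'\rangle\bigr)+(\Z^r)^n .
\]
Now $G'\langle e,m'\rangle=-\gamma^{-1}G\langle e,m'\rangle\in-\gamma^{-1}d+\Z^r$, since $\gamma^{-1}$ preserves $\Z^r$. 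So the shift is $a+\gamma^{-1}d-\gamma^{-1}d=a$ modulo $\Z^r$. Invariance of $\langle e,\cdot\rangle$ under $g^{-1}$ (Lemma~\ref{lem:angle} applied to $g^{-1}$) keeps the condition $G\langle e,\cdot\rangle\in d+\Z^r$. Hence $g^{-1}m'\in W_{a,d}$.

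The only delicate point is this reverse direction. It requires tracking how $\gamma^{-1}$ interacts with $G$, namely the commutation relation and $G'=-\gamma^{-1}G$, and using $\gamma^{-1}\Z^r=\Z^r$.
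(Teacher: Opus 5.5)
Your proof is correct and follows the paper's argument essentially step for step. The forward inclusion uses $gm=(I_n\otimes\gamma)m+e\otimes G\langle e,m\rangle$ together with Lemma~\ref{lem:angle}. The reverse inclusion uses $g^{-1}=I_n\otimes\gamma^{-1}-ee^\tr\otimes(\gamma^{-1}G)$, which you derive from the homomorphism property in Lemma~\ref{lem:construction} instead of stating it directly. You also check that $\gamma a+d\in G\Z^r$, which the paper leaves implicit.
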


\begin{proof}
Let $m\in W_{a,d}$.
Then we have $(I_n\otimes \gamma)m\in e\otimes (\gamma a)+ (\Z^r)^n$ and $e\otimes G\langle e,m\rangle\in e\otimes d +(\Z^r)^n$, so we find $gm=(I_n\otimes \gamma)m+e\otimes G\langle e,m\rangle\in e\otimes (\gamma a+d) +(\Z^r)^n$. 
Further, we have $G\langle e,gm\rangle =G\langle e,m\rangle \in d+\Z^r$, so we get $gm\in W_{\gamma a+d,d}$ and $gW_{a,d}\subset W_{\gamma a+d,d}$.
Similarly we have $g^{-1}W_{\gamma a+d,d}\subset W_{a,d}$:
let $m\in W_{\gamma a+d,d}$.
With $g^{-1}=(I_n\otimes \gamma^{-1})-ee^\tr \otimes (\gamma^{-1}G)$ we find
\[ \begin{split}
g^{-1} m &= (I_n\otimes \gamma^{-1})m - e\otimes (\gamma^{-1} G\langle e,m\rangle)\\
&\in e\otimes \bigl(\gamma^{-1}(\gamma a+d)-\gamma^{-1}d\bigr)+(\Z^r)^n=e\otimes a +(\Z^r)^n.
\end{split}\]
Further, we have $G\langle e,g^{-1}m\rangle=G\langle e,m\rangle \in d+\Z^r$, so $g^{-1}m\in W_{a,d}$.
This shows $g^{-1}W_{\gamma a+d,d}\subset W_{a,d}$, $W_{\gamma a+d,d}\subset gW_{a,d}$ and $g W_{a,d} = W_{\gamma a+d,d}$.
\end{proof}

\section{Indicator functions}

Similar to the definition of $L$ and $C$ from the previous section, we let $L^\tr:= G^\tr\Z^r+\Z^r$ and $C^\tr\subset G^\tr \Z^r$ be a complete set of coset representatives of $\Z^r$ in $L^\tr$.

\begin{definition}
For $a=Gu\in G\Z^r$ and $b=G^\tr v\in G^\tr \Z^r$, with $u,v\in\Z^r$, we define
\[[a,b]:= u^\tr G^\tr v \smod\Z.\]
\end{definition}

\begin{remark}
This definition of $[a,b]$ is independent of the choice of preimages $u,v\in\Z^r$:
if we have $Gu=Gu'$ and $G^\tr v=G^\tr v'$, then $(u-u')^\tr G^\tr v=0$ and $u^\tr G^\tr (v-v')=0$, so $u^\tr G^\tr v=u'^\tr G^\tr v$ and $u^\tr G^\tr v=u^\tr G^\tr v'$.
\end{remark}

\begin{definition}
We extend the definition of the pairing $[\cdot,\cdot]$ to $L\times L^\tr$ by defining $[m,m']:=[a,b]$, where $a\in C$ is the unique representative of the coset $m+\Z^r$ in $L$ and $b\in C^\tr$ the unique representative of the coset $m'+\Z^r$ in $L^\tr$.
\end{definition}

\begin{remark}
(a) If $a=Gu,a'=Gu'\in G\Z^r\subset L$ represent the same coset of $\Z^r$ in $L$, then we have $a-a'\in\Z^r$, so $[a,b]-[a',b]\equiv(u-u')^\tr G^t v = (a-a')^\tr v\equiv 0\smod\Z$ and $[a,b]=[a',b]$.
Similarly, the pairing is unchanged when $b$ is replaced by an element of the same coset of $\Z^r$ in $L^\tr$.\\
(b) From the definition and the independence of representatives we directly get that this pairing is bilinear.
\end{remark}

\begin{lemma}\label{lem:decomp}
For $m\in L$ we have
\[\frac1{| C^\tr|} \sum_{b\in C^\tr} e^{2\pi i[m,b]}= \begin{cases} 1, &\text{if}\ m\in \Z^r,\\
0, & \text{otherwise}.\end{cases}\]
\end{lemma}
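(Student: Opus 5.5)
This is the standard orthogonality relation for characters of the finite abelian group $L^\tr/\Z^r$. We prove it by showing that $b\mapsto e^{2\pi i[m,b]}$ is a character of that group, and that this character is trivial exactly when $m\in\Z^r$.

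First, the case $m\in\Z^r$. The representative of $m+\Z^r$ in $C$ is then some $a=Gu\in C\cap\Z^r$. For $b=G^\tr v$ we get $[a,b]\equiv u^\tr G^\tr v=(Gu)^\tr v=a^\tr v\in\Z$, since $a,v\in\Z^r$. Hence every term of the sum equals $1$, and the average is $1$.

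Now suppose $m\notin\Z^r$. By the preceding remark, $[m,\cdot]$ is well defined on $L^\tr/\Z^r$ and bilinear, so $\chi(b):=e^{2\pi i[m,b]}$ is a character of the finite abelian group $L^\tr/\Z^r$. The set $C^\tr$ is a complete system of representatives of this group. If $\chi$ is nontrivial, choose $b_0$ with $\chi(b_0)\neq1$. As $b$ runs over $C^\tr$, the elements $b+b_0$ run over all cosets again, so
\[\chi(b_0)\sum_{b\in C^\tr}\chi(b)=\sum_{b\in C^\tr}\chi(b).\]
This forces the sum to vanish.

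The main step is therefore to show that $\chi$ is nontrivial, i.e.\ that the pairing is nondegenerate in the first variable. Write the representative of $m+\Z^r$ as $a=Gu\in C$, so $a\notin\Z^r$. Taking $v=e_j$ (standard basis vectors) gives $b_j=G^\tr e_j$ and
\[[a,b_j]\equiv u^\tr G^\tr e_j=(Gu)_j=a_j \smod\Z.\]
Since $a\notin\Z^r$, some coordinate $a_j$ is not an integer. For that $j$ we get $[a,b_j]\not\equiv0$, so $\chi(b_j)\neq1$. The element $b_j$ lies in $G^\tr\Z^r\subset L^\tr$, so the argument above applies with $b_0=b_j$ and the sum is $0$.
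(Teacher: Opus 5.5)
Your proof is correct and follows the paper's argument. You handle the case $m\in\Z^r$ directly, then pick $b'=G^\tr v'$ with $[m,b']\equiv a^\tr v'\not\equiv 0 \smod\Z$ and use that $b\mapsto b+b'$ permutes the cosets of $\Z^r$ in $L^\tr$; the only difference is that you make the choice $v'=e_j$ explicit, where the paper just asserts such a $v'$ exists.
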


\begin{proof}
If $m\in\Z^r$, then $e^{2\pi i[m,b]}=e^{2\pi i[0,b]}=1$ and
\[\frac1{| C^\tr|}  \sum_{b\in C^\tr} e^{2\pi i[m,b]} = \frac1{| C^\tr|} \sum_{b\in C^\tr} 1= 1.\]
Otherwise, let $a\in C$ represent the coset $m+\Z^r$. 
Since $a\notin\Z^r$, there exists $v'\in\Z^r$ such that $a^\tr v'\notin\Z$.
Hence $b':=G^\tr v'$ satisfies $e^{2\pi i[m,b']}\neq 1$.
If $b$ runs through a complete set of coset representatives of $\Z^r$ in $L^\tr$, then so does $b+ b'$.
Since $e^{2\pi i[m,b]}$ depends only on the coset of $b$ modulo $\Z^r$, we obtain
\[ \sum_{b\in C^\tr} e^{2\pi i[m,b]}=\sum_{b\in C^\tr} e^{2\pi i[m,b+b']}=e^{2\pi i[m,b']} \sum_{b\in C^\tr}e^{2\pi i[m,b]}\quad \text{and} \quad \sum_{b\in C^\tr} e^{2\pi i[m,b]}=0.\qedhere\]
\end{proof}

\begin{lemma}\label{lem:split}
Let $a,d\in C$.
Assuming absolute convergence we have
\[ \sum_{\mu\in W_{a,d}} f(\mu)=\frac1{| C^\tr|}  \sum_{b\in C^\tr} e^{-2\pi i[d,b]}\sum_{\mu\in e\otimes a+(\Z^r)^n} e^{2\pi i \mu^\tr (e\otimes b)} f(\mu).\]
\end{lemma}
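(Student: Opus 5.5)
The plan is to start from the right-hand side, swap the two finite/absolutely convergent sums, and recognise the inner sum over $b$ as the indicator function of Lemma~\ref{lem:decomp}. That indicator will cut out exactly $W_{a,d}$ inside $e\otimes a+(\Z^r)^n$.

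The key computation is to rewrite the phase $e^{2\pi i\mu^\tr(e\otimes b)}$ for $\mu\in e\otimes a+(\Z^r)^n$ and $b=G^\tr v\in C^\tr$. First, $\mu^\tr(e\otimes b)=\langle e,\mu\rangle^\tr b$. By the proof of Lemma~\ref{lem:cosets}, $\langle e,\mu\rangle=:k\in\Z^r$, since $na\in\Z^r$ for $a\in G\Z^r$. Hence
\[
\mu^\tr(e\otimes b)=k^\tr G^\tr v = [Gk,b] \smod\Z,
\]
directly from the definition of the pairing with $u=k$. Here $Gk\in G\Z^r\subset L$, and by the remark following the definition the pairing depends only on the coset of $Gk$ modulo $\Z^r$. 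So I would state a short claim: $e^{2\pi i\mu^\tr(e\otimes b)}=e^{2\pi i[G\langle e,\mu\rangle,b]}$. This identification of the phase with the pairing, including checking that the extended pairing on $L\times L^\tr$ agrees with $u^\tr G^\tr v$ for this particular non-reduced representative, is the main step to handle carefully. It is handled by part (a) of the remark on independence of coset representatives.

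Next, bilinearity (part (b) of that remark) gives
\[
e^{-2\pi i[d,b]}e^{2\pi i\mu^\tr(e\otimes b)}=e^{2\pi i[G\langle e,\mu\rangle-d,\,b]}.
\]
Since $G\langle e,\mu\rangle-d\in L$, Lemma~\ref{lem:decomp} applies. Using absolute convergence to exchange the sums, the right-hand side becomes
\[
\sum_{\mu\in e\otimes a+(\Z^r)^n} f(\mu)\,\frac1{|C^\tr|}\sum_{b\in C^\tr}e^{2\pi i[G\langle e,\mu\rangle-d,b]}.
\]
The inner factor equals $1$ if $G\langle e,\mu\rangle\in d+\Z^r$ and $0$ otherwise. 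By the definition of $W_{a,d}$, this is precisely the indicator of $\mu\in W_{a,d}$, which yields the left-hand side. The interchange of the sums is harmless, since the $b$-sum is finite and each $\mu$-sum converges absolutely by assumption.
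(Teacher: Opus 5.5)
Your proof is correct and follows the same route as the paper. You write the phase as $\mu^\tr(e\otimes b)=\langle e,\mu\rangle^\tr G^\tr v\equiv[G\langle e,\mu\rangle,b]\pmod{\Z}$, combine it with $e^{-2\pi i[d,b]}$ by bilinearity, and use Lemma~\ref{lem:decomp} as the indicator of $W_{a,d}$ inside $e\otimes a+(\Z^r)^n$. The only differences are cosmetic: you read the chain of equalities starting from the right-hand side, and you state explicitly that the extended pairing agrees with $u^\tr G^\tr v$ on non-reduced representatives, which the paper uses implicitly.
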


\begin{proof}
For $\mu\in e\otimes a+(\Z^r)^n$, we have $\mu\in W_{a,d}$ if and only if $G\langle e,\mu\rangle-d\in\Z^r$.
Note that $G\langle e,\mu\rangle-d\in L$.
Using Lemma \ref{lem:decomp} we thus get
\[ \sum_{\mu\in W_{a,d}} f(\mu) = \frac1{|C^\tr|} \sum_{b\in C^\tr} \sum_{\mu\in e\otimes a +(\Z^r)^n} e^{2\pi i[G\langle e,\mu\rangle -d,b]}f(\mu).\]
The result then follows from
\[e^{2\pi i[G\langle e,\mu\rangle -d,b]}= e^{-2\pi i[d,b]+2\pi i\langle e,\mu\rangle^\tr G^\tr v} = e^{-2\pi i[d,b]+2\pi i\langle e,\mu\rangle^\tr b}= e^{-2\pi i[d,b]+2\pi i\mu^\tr(e\otimes b)},\]
where we used $b=G^\tr v$, with $v\in\Z^r$.
\end{proof}

\section{A General Result for Theta}\label{sec:5}

\begin{theorem}\label{theo:main}
For $a\in C$ and $b\in C^\tr$ let
\[ \Theta_{a,b}(z):= \sum_{\lambda \in e\otimes a +(\Z^r)^n} q^{\frac12 \lambda^\tr (I_n\otimes A) \lambda}\, e^{2\pi i \lambda^\tr (z+ e\otimes b)}\qquad (z\in(\C^r)^n).\]
Then
\[\begin{split}
\Theta_{a,b}\bigl(g^\tr z\bigr) &= \frac1{|C^\tr|}\sum_{a'\in C} \sum_{b'\in C^\tr} e^{-2\pi i[a'-\gamma a,b'-b]}\,\Theta_{a',b'}(z),\\
\Theta_{a,b}\bigl(g^{-\tr} z\bigr) &= \frac1{|C^\tr|}\sum_{a'\in C} \sum_{b'\in C^\tr} e^{-2\pi i[a-\gamma a',b'-b]}\,\Theta_{a',b'}(z).
\end{split}\]
\end{theorem}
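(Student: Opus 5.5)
Our plan for the first identity is to substitute $\lambda = g\mu$, split the sum with Lemma~\ref{lem:cosets}, move each piece with Lemma~\ref{lem:inv}, and then turn the resulting sums over the $W$'s back into theta functions with Lemma~\ref{lem:split}.

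We begin with two invariances. Since $g\in\Ogroup_{I_n\otimes A}(\Q)$ by Lemma~\ref{lem:construction}, we have $\frac12(g\mu)^\tr(I_n\otimes A)(g\mu)=\frac12\mu^\tr(I_n\otimes A)\mu$. Moreover
\[\lambda^\tr g^\tr z = (g\lambda)^\tr z \quad\text{and}\quad \lambda^\tr(e\otimes b)=\langle e,\lambda\rangle^\tr b = \langle e,g\lambda\rangle^\tr b\]
by Lemma~\ref{lem:angle}. Writing $f_b(\mu):= q^{\frac12\mu^\tr(I_n\otimes A)\mu}e^{2\pi i\mu^\tr(z+e\otimes b)}$, the summand of $\Theta_{a,b}(g^\tr z)$ at $\lambda$ is therefore exactly $f_b(g\lambda)$. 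We expand
\[\Theta_{a,b}(g^\tr z)=\sum_{d\in C}\sum_{\lambda\in W_{a,d}} f_b(g\lambda)=\sum_{d\in C}\sum_{\mu\in W_{\gamma a+d,d}} f_b(\mu),\]
using Lemma~\ref{lem:cosets} for the first equality and Lemma~\ref{lem:inv} for the second. We then reindex by $a'\in C$ with $a'\equiv\gamma a+d \smod{\Z^r}$. This requires $\gamma a\in L$, which holds because $\gamma G=G\gamma$, so $\gamma a\in G\Z^r$. As $d$ runs over $C$, the element $a'$ runs over $C$ bijectively, with $d\equiv a'-\gamma a$. Since $W$ depends only on cosets, this gives $\sum_{a'\in C}\sum_{\mu\in W_{a',a'-\gamma a}} f_b(\mu)$.

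Next we apply Lemma~\ref{lem:split} with the character shift. For $\mu\in W_{a',d}$, the expression $e^{2\pi i\mu^\tr(e\otimes b)}$ is constant on $W_{a',d}$, but it is simpler to keep it inside $f_b$. Lemma~\ref{lem:split} gives
\[\sum_{\mu\in W_{a',d}}f_b(\mu)=\frac1{|C^\tr|}\sum_{b''\in C^\tr}e^{-2\pi i[d,b'']}\sum_{\mu\in e\otimes a'+(\Z^r)^n}e^{2\pi i\mu^\tr(e\otimes b'')}f_b(\mu).\]
The inner sum is $\Theta$ with index $b+b''$. We substitute $b'\equiv b+b''$ in $C^\tr$, so $b''\equiv b'-b$, and must check that replacing $b+b''$ by its representative $b'$ leaves the theta function unchanged. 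For $\mu\in e\otimes a'+(\Z^r)^n$ and $t\in\Z^r$ we have $\mu^\tr(e\otimes t)=\langle e,\mu\rangle^\tr t\in\Z$, since $\langle e,\mu\rangle\in\Z^r$ (as $na'\in\Z^r$). Hence $\Theta_{a',b+b''}=\Theta_{a',b'}$, with $d=a'-\gamma a$ giving the coefficient $e^{-2\pi i[a'-\gamma a,b'-b]}$. Bilinearity and coset-independence of $[\cdot,\cdot]$ justify all these replacements, and absolute convergence follows from positive definiteness.

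The second identity follows by the same argument with $g^{-1}$ in place of $g$. Since $g^{-1}$ corresponds to $\gamma^{-1}$ by the homomorphism property, Lemma~\ref{lem:inv} gives $g^{-1}W_{a,d}=W_{\gamma^{-1}(a-d),d}$. Alternatively we use the first formula directly, summing over $\lambda\in W_{a,d}$ with $\lambda=g\mu$, $\mu\in W_{a',d}$ where $\gamma a'+d\equiv a$, so $d\equiv a-\gamma a'$. The main bookkeeping obstacle is making sure the well-definedness of this reindexing (coset representatives and $\gamma$ permuting $L/\Z^r$) is handled cleanly.
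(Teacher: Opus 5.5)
Your proposal is correct and follows essentially the paper's route: split with Lemma~\ref{lem:cosets}, transport each piece with Lemma~\ref{lem:inv}, reindex via $d\equiv a'-\gamma a$, and reassemble with Lemma~\ref{lem:split}. The differences are minor: you use Lemma~\ref{lem:angle} where the paper uses the equivalent $g^{-\tr}(e\otimes b)=e\otimes b$, and you explicitly check $\gamma a\in L$ and the coset-independence that the paper leaves implicit. For the second identity, your ``alternative'' (inverting $gW_{a',d}=W_{\gamma a'+d,d}$) is exactly what the paper does and is the cleaner justification, since applying Lemma~\ref{lem:inv} literally to $\gamma^{-1}$ would produce $W$-sets defined via $\frac1n(I_r-\gamma^{-1})=-\gamma^{-1}G$ rather than via $G$.
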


\begin{remark}
(a) If $a,a'\in G\Z^r\subset L$ are in the same coset of $\Z^r$ in $L$, then $a'\in a+\Z^r$, so $e\otimes a' +(\Z^r)^n=e\otimes a+(\Z^r)^n$.
Hence $\Theta_{a,b}$ doesn't depend on the choice of representative $a$.
It also doesn't depend on the choice of representative $b$:
if $b,b'\in G^\tr\Z^r\subset L^\tr$ are in the same coset of $\Z^r$ in $L^\tr$, then $b-b'\in\Z^r$, so for all $\lambda\in e\otimes a +(\Z^r)^n$ we have
\[ e^{2\pi i\lambda^\tr(e\otimes (b-b'))}= e^{2\pi i(e\otimes a)^\tr(e\otimes (b-b'))}=e^{2\pi ina^\tr (b-b')}=1,\]
where we used that $na\in\Z^r$.
Hence we find $e^{2\pi i\lambda^\tr(e\otimes b)}=e^{2\pi i\lambda^\tr(e\otimes b')}$ and $\Theta_{a,b}=\Theta_{a,b'}$.\\
(b) For $x\in\C^r$ we have
\[ \Theta_{a,b}(e\otimes x) = \theta_{a,b}(x)^n,\qquad \text{with}\qquad \theta_{a,b}(x):= \sum_{\lambda\in a+\Z^r} q^{\frac12\lambda^\tr A \lambda}\,  e^{2\pi i\lambda^t(x+b)}.\]
\end{remark}

\begin{proof}[Proof of Theorem \ref{theo:main}]
Let \[ \widetilde\Theta_{a,d,b}(z):= \sum_{\lambda \in W_{a,d}} q^{\frac12 \lambda^\tr (I_n\otimes A) \lambda}\, e^{2\pi i \lambda^\tr (z+ e\otimes b)}.\]
Using Lemma \ref{lem:cosets} and Lemma \ref{lem:split} we get
\begin{align}
\Theta_{a,b}(z)&=\sum_{d\in C} \widetilde\Theta_{a,d,b}(z),\label{pro1}\\
\widetilde\Theta_{a,d,b}(z)&= \frac1{|C^\tr|} \sum_{b'\in C^\tr}e^{-2\pi i[d,b']}\, \Theta_{a,b+b'}(z).\label{pro2}
\end{align}
We now consider $\widetilde\Theta_{a,d,b}(g^\tr z)$ and substitute $\lambda=g^{-1}\mu$ in the definition.
From Lemma \ref{lem:construction} we get
\[ (g^{-1})^\tr (I_n\otimes A) g^{-1}=I_n\otimes A,\qquad g(e\otimes a)=e\otimes a\qquad \text{and} \qquad (g^{-1})^\tr (e\otimes b)=e\otimes b.\]
Further, we have $gW_{a,d}=W_{\gamma a+d,d}$ (Lemma \ref{lem:inv}), so
\begin{equation}\label{pro3}
\widetilde\Theta_{a,d,b}(g^\tr z) = \widetilde\Theta_{\gamma a+d,d,b}(z).
\end{equation}
Using \eqref{pro1}, \eqref{pro3} and then \eqref{pro2} thus gives
\[ \begin{split}
\Theta_{a,b}(g^\tr z)&=\sum_{d\in C} \widetilde\Theta_{a,d,b}(g^\tr z)=\sum_{d\in C} \widetilde\Theta_{\gamma a+d,d,b}(z)\\
&= \frac1{|C^\tr|}\sum_{d\in C} \sum_{b'\in C^\tr}e^{-2\pi i[d,b']}\, \Theta_{\gamma a+d,b+b'}(z)= \frac1{|C^\tr|}\sum_{a'\in C} \sum_{b'\in C^\tr} e^{-2\pi i[a'-\gamma a,b'-b]}\, \Theta_{a',b'}(z),
\end{split}\]
where we replaced $b'$ by $b'-b$ and substituted $d=a'-\gamma a$ in the last step.
From \eqref{pro3} we also get
\[\widetilde\Theta_{a,d,b}(g^{-\tr}z)=\widetilde\Theta_{\gamma^{-1}(a-d),d,b}(z),\]
which similarly gives
\[ \begin{split}
\Theta_{a,b}&(g^{-\tr}z)=\sum_{d\in C} \widetilde\Theta_{a,d,b}(g^{-\tr} z)=\sum_{d\in C} \widetilde\Theta_{\gamma^{-1}(a-d),d,b}(z)\\
&= \frac1{|C^\tr|}\sum_{d\in C} \sum_{b'\in C^\tr}e^{-2\pi i[d,b']}\, \Theta_{\gamma^{-1} (a-d),b+b'}(z)= \frac1{|C^\tr|}\sum_{a'\in C} \sum_{b'\in C^\tr} e^{-2\pi i[a-\gamma a',b'-b]}\, \Theta_{a',b'}(z),
\end{split}\]
where we replaced $b'$ by $b'-b$ and substituted $d=a-\gamma a'$ in the last step.
\end{proof}

\section{Schultz's Cubic Theta Identity as an Example}\label{sec:schultz}

Let $A=\left(\begin{smallmatrix} 2&1\\1&2\end{smallmatrix}\right)$.
The group $\Ogroup_A(\Z)$ has twelve elements and is isomorphic to the dihedral group $D_6$.
We consider $\gamma= \left(\begin{smallmatrix} -1&-1\\1&0\end{smallmatrix}\right)\in \Ogroup_A(\Z)$ of order three and $n=3$.
We have $G=\frac13\left(\begin{smallmatrix} 2&1\\-1&1\end{smallmatrix}\right)$ and $L=\{m\in \frac13\Z^2 \mid m_1-m_2\in\Z\}$.
A complete set $C\subset G\Z^2$ of coset representatives of $\Z^2$ in $L$ is $C=\{ 0,\frac13u,\frac23u\}$, with $u:=(1,1)^\tr$.
Similarly, $G^\tr=\frac13\left(\begin{smallmatrix} 2&-1\\1&1\end{smallmatrix}\right)$, $L^\tr=\{m\in \frac13\Z^2 \mid m_1+m_2\in\Z\}$ and $C^\tr=\{ 0,\frac13v,\frac23v\}$, with $v:= (1,-1)^\tr$.
Theorem \ref{theo:main} gives ($\alpha,\beta\in\{0,\frac13,\frac23\}$)
\[\begin{split}
\Theta_{\alpha u,\beta v}(g^\tr z) &= \frac13 \sum_{\alpha',\beta'\in\{0,\frac13,\frac23\}}\omega^{9(\alpha'-\alpha)(\beta'-\beta)}\,\Theta_{\alpha' u,\beta' v}(z),\\
\Theta_{\alpha u,\beta v}(g^{-\tr} z) &= \frac13 \sum_{\alpha',\beta'\in\{0,\frac13,\frac23\}}\omega^{-9(\alpha'-\alpha)(\beta'-\beta)}\,\Theta_{\alpha' u,\beta' v}(z).
\end{split}\]
Using $\omega^k-\omega^{-k}= i\sqrt 3\,\ileg k3$ (where $\ileg{\cdot}{\cdot}$ is the Legendre symbol) we thus find
\[ \Theta_{\alpha u,\beta v}(g^\tr z)-\Theta_{\alpha u,\beta v}(g^{-\tr} z) =\frac i{\sqrt{3}}\sum_{\alpha',\beta'\in\{0,\frac13,\frac23\}} \leg{9(\alpha'-\alpha)(\beta'-\beta)}3\,\Theta_{\alpha' u,\beta' v}(z).\]
With $z=e\otimes x$ this gives
\[ \sum_{\alpha',\beta'\in\{0,\frac13,\frac23\}} \leg{9(\alpha'-\alpha)(\beta'-\beta)}3\,\theta_{\alpha' u,\beta' v}(x)^3=0.\]
For $\alpha=\beta=\frac23$ we get the following identity, which is equivalent to \eqref{eq:schultz}:
\[ \theta_{0,0}(x)^3-\theta_{0,\frac13v}(x)^3-\theta_{\frac13u,0}(x)^3+\theta_{\frac13u,\frac13v}(x)^3=0\]

\section{Further Examples}

\begin{example}
Let $A=\left(\begin{smallmatrix} 2d&b\\b&2d\end{smallmatrix}\right)$ with $-2d<b<2d$, let $\gamma= \left(\begin{smallmatrix} 0&-1\\-1&0\end{smallmatrix}\right)\in \Ogroup_A(\Z)$ of order two and let $n=2$.
We have $G=G^\tr=\frac12\left(\begin{smallmatrix} 1&1\\1&1\end{smallmatrix}\right)$ and $L=L^\tr=\{m\in \frac12\Z^2 \mid m_1-m_2\in\Z\}$.
Then $C=C^\tr=\{0,\frac12 u\}$, with $u:=(1,1)^\tr$, and so Theorem \ref{theo:main} gives ($\alpha,\beta\in\{0,\frac12\}$)
\[ \Theta_{\alpha u,\beta u}(g^\tr z) = \frac12 \sum_{\alpha',\beta'\in\{0,\frac12\}}(-1)^{4(\alpha'+\alpha)(\beta'-\beta)}\,\Theta_{\alpha' u,\beta' u}(z).\]
With $\alpha=\beta=0$ and $z=e\otimes x$ this gives the following identity, which is equivalent to Corollary 5.1 in \cite{chan2025}:
\[ \theta_{0,0}(x)^2-\theta_{0,\frac12u}(x)^2-\theta_{\frac12u,0}(x)^2+\theta_{\frac12u,\frac12u}(x)^2=0\]

\end{example}

\begin{example}
Let $A=\left(\begin{smallmatrix} 2a&a\\a&2b\end{smallmatrix}\right)$ with $0<a<4b$, let $\gamma= \left(\begin{smallmatrix} -1&-1\\0&1\end{smallmatrix}\right)\in \Ogroup_A(\Z)$ of order two and let $n=2$.
We have $G=\frac12\left(\begin{smallmatrix} 2&1\\0&0\end{smallmatrix}\right)$, $L=\{m\in \frac12\Z^2 \mid m_2\in\Z\}$ and $C=\{0,\frac12u\}$, with $u=(1,0)^\tr$.
Further, $G^\tr=\frac12\left(\begin{smallmatrix} 2&0\\1&0\end{smallmatrix}\right)$, $L^\tr=\{m\in \frac12\Z^2 \mid m_1\in\Z\}$ and $C^\tr=\{0,\frac12v\}$, with $v=(2,1)^\tr$.
Theorem \ref{theo:main} gives ($\alpha,\beta\in\{0,\frac12\}$)
\[ \Theta_{\alpha u,\beta v}(g^\tr z) = \frac12 \sum_{\alpha',\beta'\in\{0,\frac12\}}(-1)^{4(\alpha'+\alpha)(\beta'-\beta)}\,\Theta_{\alpha' u,\beta' v}(z).\]
With $\alpha=\beta=0$ and $z=e\otimes x$ this gives the following identity, which is equivalent to Corollary 5.2 in \cite{chan2025}:
\[ \theta_{0,0}(x)^2-\theta_{0,\frac12v}(x)^2-\theta_{\frac12u,0}(x)^2+\theta_{\frac12u,\frac12v}(x)^2=0\]
\end{example}

\end{document}